      \newcommand {\del}  {\delta}          
                 \newcommand {\vphi} {\varphi}
      \newcommand {\lam}  {\lambda}         
      \newcommand {\eee}   {e}       
                \newcommand {\Om}  {\Omega}
      \newcommand {\pl}   {\partial}        
      \newcommand {\kkk}   {k}           \newcommand {\KKK}   {K_0}
           \newcommand {\UUU}  {{\cal U}}
      \newcommand {\RRR}  {{\mathbb R}}     
              \newcommand {\MMM}  {{\cal M}}
      \newcommand {\FFF}  {{\cal F}}        
               \newcommand {\CCC}  {\mathfrak{C}}
        \newcommand {\fff}  {f}                                   \newcommand {\LLL}  {{\cal L}}
     \newcommand {\beq}  {\begin{equation}}
      \newcommand {\eeq}  {\end{equation}}
      \newtheorem{theorem}{Theorem}
      \newtheorem{lemma}{Lemma}
      \newtheorem{zam}{Remark}
            \newtheorem{corollary}{Corollary}
                    \newtheorem{conjecture}{Conjecture}
\title{On the structure of singular points of a solution\\ to Newton's least resistance problem}
\author{Alexander Plakhov\thanks{Center for R{\&}D in Mathematics and Applications, Department of Mathematics, University of Aveiro, Portugal and Institute for Information Transmission Problems, Moscow, Russia, plakhov@ua.pt}}
\begin{document}
\maketitle


\begin{abstract}
We consider the following problem stated in 1993 by Buttazzo and Kawohl \cite{BK}: minimize the functional $\int\!\!\int_\Om (1 + |\nabla u(x,y)|^2)^{-1} dx\, dy$ in the class of concave functions $u: \Om \to [0,M]$, where $\Om \subset \RRR^2$ is a convex domain and $M > 0$. It generalizes the classical minimization problem, which was initially stated by I. Newton in 1687 in the more restricted class of radial functions. The problem is not solved until now; there is even nothing known about the structure of singular points of a solution.

In this paper we, first, solve a family of auxiliary 2D least resistance problems and, second, apply the obtained results to study singular points of a solution to our original problem. More precisely, we derive a necessary condition for a point being a ridge singular point of a solution and prove, in particular, that all ridge singular points with horizontal edge lie on the top level and zero level sets.
\end{abstract}

\begin{quote}
{\small {\bf Mathematics subject classifications:} 52A15, 26B25, 49Q10}
\end{quote}

\begin{quote}
{\small {\bf Key words and phrases:}
Newton's problem of least resistance, convex geometry, singular points of a convex body}
\end{quote}

\section{Introduction}

Isaac Newton in his {\it Principia} \cite{N} considered the following minimization problem. A solid body moves with constant velocity in a sparse medium. Collisions of the medium particles with the body are perfectly elastic. The absolute temperature of the medium is zero, so as the particles are initially at rest. The medium is extremely rare, so that mutual interactions of the particles are neglected. As a result of body-particle collisions, the drag force acting on the body is created. This force is usually called {\it resistance}.

The problem is: given a certain class of bodies, find the body in this class with the smallest resistance. Newton considered the class of convex bodies that are rotationally symmetric and have fixed length along the direction of motion and fixed maximal width.

In modern terms the problem can be formulated as follows. Let a reference system $x_1,\, x_2,\, z$ be connected with the body and the $z$-axis coincide with the symmetry axis of the body. We assume that the particles move downward along the $z$-axis. Let the upper part of the body's surface be the graph of a concave radially symmetric function $z = u(x_1, x_2) = \vphi(\sqrt{x_1^2 + x_2^2})$, $x_1^2 + x_2^2 \le L^2$; then the resistance equals
$$
2\pi \rho v^2 \int_0^L \frac{1}{1 + \vphi'(r)^2}\, r\, dr,
$$
where the density of the medium $\rho$ and the scalar velocity of the body $v$ are assumed to be constant. The problem is to minimize 
 the resistance in the class of convex monotone decreasing functions $\vphi : [0,\, L] \to \RRR$ satisfying $0 \le \vphi \le M$. Here $M$ and $L$ are the parameters of the problem: $M$ is length of the body and $2L$ is its maximal width.

Newton gave a geometric description of the solution to the problem and did not explain how the solution was obtained. The optimal function bounds a convex body that looks like a truncated cone with slightly inflated lateral boundary. An optimal body, corresponding to the case when the length is equal to the maximal width, is shown in Fig.~\ref{figNewton}.
    \begin{figure}[h]
\centering
\hspace*{6mm}
\includegraphics[scale=0.45]{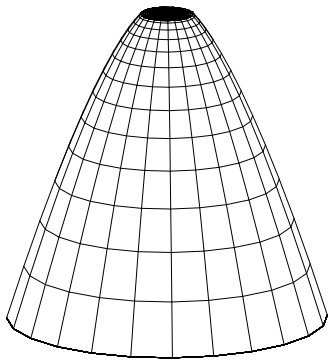} 
\caption{A solution to the rotationally symmetric Newton problem.}
\label{figNewton}
\end{figure}

Starting from the pioneering paper by Buttazzo and Kawohl \cite{BK}, the problem of minimal resistance has been studied in various classes of (generally) nonsymmetric and/or (generally) nonconvex bodies; see, e.g., \cite{CL1,CL2,BrFK,BFK,BG97,LO,W,AP,SIREV,ARMA,OMT,bookP}.

In this paper we consider the generalization of the original Newton's problem to the class of convex bodies without the assumption of axial symmetry. The problem is as follows:

\begin{quote}
Minimize
\beq\label{Resist}
F(u) = \int\!\!\!\int_\Om \frac{1}{1 + |\nabla u(x_1,x_2)|^2}\, dx_1 dx_2
\eeq
in the class of functions
$$
\CCC_M = \{ u : \Om \to \RRR :\, 0 \le u \le M,\ u  \text{ is concave} \}.
$$
\end{quote}

Here $\Om \subset \RRR^2$ is a compact convex set with nonempty interior int$(\Om)$, and $M > 0$ is the parameter of the problem.

Surprisingly enough, this problem is still poorly understood. It is known that there exists at least one solution \cite{M,BFK}. Let $u$ be a solution; then $u\rfloor_{\pl\Om} = 0$ \cite{boundary} and at any regular point $x = (x_1, x_2)$ of $u$ we have either $|\nabla u(x)| \ge 1$, or $|\nabla u(x)| = 0$ \cite{BFK}. Moreover, if the set $L = \{ x : u(x) = M \} \subset \RRR^2$ has nonempty interior then we have $\lim_{\stackrel{x \to \bar x}{x \not\in L}} |\nabla u(x)| = 1$ for almost all $\bar x \in \pl L$ \cite{MMS}. If $u$ is regular in an open set $\UUU \subset \Om$, then the surface graph$\big( u\rfloor_\UUU \big) = \{ (x, u(x)) : x \in \UUU \}$ does not contain extreme points of the convex body\footnote{A convex body is a convex compact set with nonempty interior.}
$$
C_u = \{ (x,z) :\, x \in \Om,\ 0 \le z \le u(x) \},
$$
and therefore, is developable \cite{nose stretching}.

This paper is devoted to studying singular points of an optimal function $u$ or, equivalently, singular points of the corresponding convex body $C = C_u$.

A singular point $r_0$ on the boundary of a convex body $C$ is called {\it conical point}, if the tangent cone to $C$ at $r_0$ is not degenerate, and {\it ridge point}, if the tangent cone degenerates into a dihedral angle (see, e.g., \cite{Pogorelov}). In this paper we consider ridge points, postponing the study of conical points to the future.

Let $r_0 = (x, u(x))$,\, $x \in \text{int}(\Om)$ be a ridge point of $C = C_u$ and $e_1$ and $e_2$ be the outward normals to the faces of the corresponding dihedral angle. Introduce some additional notation. Let $l$ be the edge of this angle, and denote by $\theta \in [0,\, \pi/2)$ the angle between $l$ and the $x$-plane. Draw a plane orthogonal to $l$, that is, parallel to $e_1$ and $e_2$. The angle between the plane and the $z$-axis is $\theta$.

Take an orthonormal basis $\fff_1,\, \fff_2$ in this plane, so as $\fff_1$ is horizontal and the $z$-coordinate of $\fff_2$ is positive. Let $e_1$ and $e_2$ form the angles $\vphi_1$ and $\vphi_2$ with $\fff_2$ counted in a certain direction, $-\pi/2 < \vphi_2 < \vphi_1 < \pi/2$. In appropriate coordinates $x_1,\, x_2$ on the $x$-plane, each vector $e_i,\, i = 1,\, 2$ takes the form
$$
e_i = (-\sin\vphi_i,\, \cos\vphi_i \sin\theta,\, \cos\vphi_i \cos\theta).
$$

The triple of angles $\theta$, $\vphi_1$, and $\vphi_2$ uniquely defines a dihedral angle, up to motions of the $x$-plane.

The following Theorem \ref{t0} is the main result of this paper. It establishes a necessary condition for a ridge singular point of an optimal body.

 \begin{theorem}\label{t0}
Let $\theta$, $\vphi_1$, and $\vphi_2$ be the angles associated with a ridge point of an optimal body.

(a) If $\theta \in [\pi/4,\, \pi/2)$ then $\vphi_1$ and $\vphi_2$ satisfy the inequalities
$$
2\vphi_1 + \vphi_2 \le \pi/2, \qquad \vphi_1 + 2\vphi_2 \ge -\pi/2;
$$
the corresponding set of admissible points $(\vphi_1, \vphi_2)$ is the triangle (i) in Fig.~\ref{fig2Dmeas}.

(b) If $\theta \in (0,\, \pi/4)$ then
$$
2\vphi_1 + \vphi_2 \le \pi/2, \quad \vphi_1 + 2\vphi_2 \ge -\pi/2, \quad |\vphi_1| \ge \vphi_*, \quad |\vphi_2| \ge \vphi_*,
$$
where $\vphi_* =\arccos\big( \frac{1/\sqrt 2}{\cos\theta} \big)$. The corresponding set is shown lightgray in Fig.~\ref{fig2Dab}\,(a).

(c) If $\theta = 0$ then either $(\vphi_1, \vphi_2) = (\pi/4, 0)$, or $(\vphi_1, \vphi_2) = (0, -\pi/4)$, or $\vphi_1$ and $\vphi_2$ satisfy
$$
2\vphi_1 + \vphi_2 \le \pi/2, \quad \vphi_1 + 2\vphi_2 \ge -\pi/2, \quad \vphi_1 \ge \pi/4, \quad \vphi_2 \le -\pi/4.
$$
The corresponding set is the union of two points and a quadrangle shown lightgray in Fig.~\ref{fig2Dab}\,(b).
\end{theorem}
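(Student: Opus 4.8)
The plan is to obtain conditions (a)--(c) from two independent ingredients: a local variational argument near $r_0$, reduced to a one-dimensional (``2D'') least resistance problem in the plane transverse to the edge, and the known regularity of $u$ at regular points (\cite{BFK}). \emph{Reduction to a 2D problem.} In a neighbourhood of $r_0$ the body $C=C_u$ coincides with the dihedral wedge $W=\{r:\ \langle r-r_0,e_i\rangle\le 0,\ i=1,2\}$, so any competitor in $\CCC_M$ agreeing with $u$ outside a small neighbourhood of $x$ corresponds to a convex body contained in $C$ near $r_0$: the only admissible local modifications \emph{shave} the wedge. Introducing the orthonormal frame $(\fff_1,\fff_2,\hat l)$ ($\fff_1,\fff_2$ as in the statement, $\hat l$ along the edge) and parametrising a ``cylindrical'' shaving by a fixed concave profile $g=g(\tau)$ of the transverse section (height along $\fff_2$ against the coordinate $\tau$ along $\fff_1$), one computes, using that $(1+|\nabla u|^2)^{-1}$ equals the squared vertical component of the unit surface normal and that the Jacobian between $(\text{position along }l,\tau)$ and horizontal area is $\cos\theta$,
\[
\Delta F=\cos^3\theta\cdot(\text{length shaved along }l)\cdot\Delta\int\frac{d\tau}{1+g'(\tau)^2}.
\]
Hence optimality of $u$ forces the transverse section at $r_0$ --- a 2D corner with left slope $p_1=\tan\vphi_1>p_2=\tan\vphi_2$ --- to be non-improvable for $\int(1+g'^2)^{-1}d\tau$ among concave profiles lying below $W$. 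The basic competitor replaces a small piece of the corner by a straight chord joining the two faces; the chord has slope $q=\lambda p_1+(1-\lambda)p_2$ for a free $\lambda\in(0,1)$, and it strictly lowers the 2D resistance exactly when $h(q)<\lambda h(p_1)+(1-\lambda)h(p_2)$ with $h(p):=(1+p^2)^{-1}$. To make this competitor genuinely local one takes $\tilde u=\min(u,\psi)$ with $\psi$ concave, equal to the ``chord plane'' near $r_0$, and bent steeply upward so that $\psi>u$ outside a thin slab, the transition region then contributing $o(1)$ relative to the main term.

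\emph{The corner condition.} So $u$ being optimal forces $h$ to lie above its chord on all of $[p_2,p_1]$. Since $h$ is a unimodal ``hump'' with $h''>0$ for $|p|>1/\sqrt3$ and $h''<0$ for $|p|<1/\sqrt3$, a short sign analysis of $h-(\text{chord})$ on the convex--concave--convex decomposition of $[p_2,p_1]$ shows this is equivalent to $h'(p_1)\le m\le h'(p_2)$, where $m=\bigl(h(p_1)-h(p_2)\bigr)/(p_1-p_2)$ is the chord's slope. Writing $h(p)=\cos^2\vphi$, $p=\tan\vphi$, and using product-to-sum identities, the equation $m=h'(p_1)$ collapses to $\sin(\vphi_1+2\vphi_2)=\sin 3\vphi_1$, i.e. (discarding the degenerate root $\vphi_1=\vphi_2$) to $2\vphi_1+\vphi_2=\pm\pi/2$; by the reflection symmetry $(\vphi_1,\vphi_2)\mapsto(-\vphi_2,-\vphi_1)$, $m=h'(p_2)$ gives $\vphi_1+2\vphi_2=\pm\pi/2$. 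Tracking signs at one interior point (e.g. $(\vphi_1,\vphi_2)=(\pi/4,-\pi/4)$), the corner condition becomes $|2\vphi_1+\vphi_2|\le\pi/2$ and $|\vphi_1+2\vphi_2|\le\pi/2$; and since $2\vphi_1+\vphi_2=(\vphi_1+2\vphi_2)+(\vphi_1-\vphi_2)$ with $\vphi_1-\vphi_2>0$ (and symmetrically), for $\vphi_1>\vphi_2$ this is equivalent to $2\vphi_1+\vphi_2\le\pi/2$ and $\vphi_1+2\vphi_2\ge-\pi/2$. This is exactly statement (a).

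\emph{Incorporating \cite{BFK} and the case split.} The faces of $W$ are regular points of $u$, so by \cite{BFK} each has $|\nabla u|=0$ or $|\nabla u|\ge1$; as $|\nabla u|^2=\cos^{-2}\vphi_i\cos^{-2}\theta-1$ on face $i$, this means either the face is horizontal (possible only if $\theta=0$ and $\vphi_i=0$) or $\cos\vphi_i\cos\theta\le1/\sqrt2$, i.e. $|\vphi_i|\ge\vphi_*=\arccos\bigl(\tfrac{1/\sqrt2}{\cos\theta}\bigr)$. For $\theta\in[\pi/4,\pi/2)$ this is vacuous, leaving statement (a). For $\theta\in(0,\pi/4)$ it adjoins $|\vphi_1|\ge\vphi_*$, $|\vphi_2|\ge\vphi_*$ to the two inequalities above, giving (b). For $\theta=0$ one has $\vphi_*=\pi/4$: if a face is horizontal, then $\vphi_1>\vphi_2$ together with the two inequalities pins $(\vphi_1,\vphi_2)$ to $(\pi/4,0)$ or $(0,-\pi/4)$; if not, then (any configuration with $\vphi_1<0$ or $\vphi_2>0$ being already excluded by the inequalities) $\vphi_1\ge\pi/4$ and $\vphi_2\le-\pi/4$, yielding (c).

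The step I expect to be the main obstacle is the rigorous localisation in the reduction: building $\psi$ concave with the prescribed behaviour and estimating the transition term so that $\Delta F$ inherits the sign of $\Delta\int(1+g'^2)^{-1}d\tau$. The remaining work is the routine but fiddly trigonometry around the chord of $h$, the sign-tracking identifying the admissible region, and verifying the boundary cases --- in particular that $(\pi/4,0)$ and $(0,-\pi/4)$, where the chord of $h$ is tangent to the graph at an endpoint, are genuinely non-improvable.
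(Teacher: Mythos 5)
Your overall architecture (a transverse 2D resistance problem at the edge, combined with the Buttazzo--Ferone--Kawohl gradient constraint and the case split on $\theta$) matches the paper's, and your trigonometric identification of the ``corner condition'' with $2\vphi_1+\vphi_2\le\pi/2$, $\vphi_1+2\vphi_2\ge-\pi/2$ is correct --- it is exactly case (i) of the paper's Lemma \ref{l 2D}. But the reduction step contains a genuine gap: you assert that ``in a neighbourhood of $r_0$ the body $C=C_u$ coincides with the dihedral wedge $W$''. That is false in general. A ridge point is only a point whose \emph{tangent cone} is a dihedral angle; the surface need not contain any flat piece near $r_0$ and may be strictly convex on both sides of the edge, merely asymptotic to $W$. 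Consequently your computation of the resistance removed by the chord-plane shaving --- the weighted sum $\lambda h(p_1)+(1-\lambda)h(p_2)$ coming from two flat faces --- is unjustified: the cap cut off by the plane carries a surface-area measure whose normals cluster on the whole arc of the great circle between $e_1$ and $e_2$, with an a priori unknown distribution along that arc.

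This is precisely the difficulty the paper's proof is built to handle. It invokes Theorem 2 of \cite{MMS}: the normalized measures $\frac{1}{|B_t|}\nu_{S_t}$ of the cut-off caps have a weak partial limit $\nu_*$ supported on the arc, containing $e_1$ and $e_2$ in its support and with barycenter $e$; optimality then yields only $\Phi(\mu_*)\le\Phi(\del_{\vphi_0})$ for this \emph{unknown} $\mu_*\in\MMM_{\vphi_0,\vphi_1,\vphi_2}$, not for the two-atom measure. To get a contradiction outside region (i) one therefore needs the full solution of the auxiliary 2D problem (Lemma \ref{l 2D meas}): in cases (ii)--(iv) one can choose $\vphi_0$ so that $\del_{\vphi_0}$ is the \emph{unique} minimizer of $\Phi$ over the whole class $\MMM_{\vphi_0,\vphi_1,\vphi_2}$, whence $\Phi(\mu_*)>\Phi(\del_{\vphi_0})$ because $\mu_*\ne\del_{\vphi_0}$. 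Your ``$h$ lies above its chord'' criterion tests only the two-atom competitor and is therefore not, by itself, a consequence of optimality. (By contrast, the localisation you single out as the main obstacle is the easy part: $\min(u,\text{affine})$ is already concave and admissible, so no auxiliary bent $\psi$ is needed --- the paper simply intersects $C$ with a half-space.) The rest of your argument --- the endpoint-derivative characterisation of the chord condition, the trigonometry, and the use of \cite{BFK} in cases (b) and (c) --- is sound and agrees with the paper.
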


\begin{zam}\label{z1}
Observe that almost all points on the boundary $\pl\LLL$ of the upper level set (ULS) $\LLL = \{ (x, u(x)) : u(x) = M \}$ are ridge points with horizontal edge. The rest of the points on $\pl\LLL$ are (finitely or countably many) conical points.

If $\LLL$ has nonempty interior, each ridge point on $\pl\LLL$ corresponds to one of the points $(\pi/4, 0)$ and $(0, -\pi/4)$; see Fig.~\ref{fig2Dab}\,(b). If, otherwise, $\LLL$ is a line segment, each ridge point on $\pl\LLL$ corresponds to a point of the quadrangle shown lightgray in Fig.~\ref{fig2Dab}\,(b). There are no other ridge points with horizontal edge.
\end{zam}

Let us formulate this observation as a corollary of the theorem.

  \begin{corollary}\label{cor2}
All ridge points with horizontal edge of an optimal body lie on the boundary of the upper level set $C_u \cap \{ z = M \}$ and on the boundary of the zero level set $C_u \cap \{ z = 0 \}$.
\end{corollary}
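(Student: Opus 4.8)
The plan is to read the corollary off part~(c) of Theorem~\ref{t0}, supplemented by the elementary fact that an optimal $u$ actually attains the value $M$. A ridge point with horizontal edge is precisely one whose associated angle is $\theta=0$; so for such a point Theorem~\ref{t0}(c) restricts the pair $(\vphi_1,\vphi_2)$ to the two isolated pairs $(\pi/4,0)$ and $(0,-\pi/4)$, or to the quadrangle $\{\vphi_1\ge\pi/4,\ \vphi_2\le-\pi/4,\ 2\vphi_1+\vphi_2\le\pi/2,\ \vphi_1+2\vphi_2\ge-\pi/2\}$. I would show that each of these possibilities forces the ridge point onto $\pl\LLL$, and then separately dispose of the points not covered by the theorem, i.e.\ those whose projection lies on $\pl\Om$.

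First the auxiliary fact. If $\max_\Om u<M$ (the case $u\equiv0$ being vacuous, as it has no singular points), then $(1+t)u$ is admissible for small $t>0$, and since $s\mapsto 1/(1+s^2)$ is strictly decreasing and $\nabla u\neq0$ on a set of positive measure (a nonconstant concave function is locally Lipschitz, and here $u\rfloor_{\pl\Om}=0$ with $u\not\equiv0$ makes $u$ nonconstant), we get $F((1+t)u)<F(u)$, contradicting optimality. Hence $\max_\Om u=M$.

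Next the local computation at a ridge point $r_0=(x_0,c)$, $c=u(x_0)$, $x_0\in\mathrm{int}(\Om)$, with $\theta=0$. Writing $x_0=(a,b)$ in the coordinates of the statement, we have $e_i=(-\sin\vphi_i,0,\cos\vphi_i)$, so the supporting plane of $C_u$ at $r_0$ with outward normal $e_i$ is $\{z=c+\tan\vphi_i\,(x_1-a)\}$, whence
$$
u(x_1,x_2)\ \le\ c+\tan\vphi_i\,(x_1-a)\qquad\text{for every }(x_1,x_2)\in\Om,\ i=1,2.
$$
If $(\vphi_1,\vphi_2)$ lies in the quadrangle, then $\tan\vphi_1\ge1>0$ and $\tan\vphi_2\le-1<0$, and using the first inequality for $x_1\le a$ and the second for $x_1\ge a$ gives $u\le c$ on $\Om$, with strict inequality whenever $x_1\neq a$. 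If $(\vphi_1,\vphi_2)=(\pi/4,0)$ then $e_2$ is vertical, so the second inequality already reads $u\le c$ on $\Om$, while $e_1$ gives $u<c$ for $x_1<a$; the case $(0,-\pi/4)$ is symmetric. In all three cases $c=\max_\Om u=M$, so $r_0\in\LLL$, and since $u<M$ at points of $\Om$ arbitrarily close to $x_0$, the point $r_0$ is not interior to $\LLL$; hence $r_0\in\pl\LLL$, the boundary of the top level set $C_u\cap\{z=M\}$.

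Finally, a ridge point with $x_0\in\pl\Om$ has $c=u(x_0)=0$ by the boundary condition $u\rfloor_{\pl\Om}=0$, hence lies on $\pl\Om\times\{0\}=\pl\big(C_u\cap\{z=0\}\big)$; and a ridge point with $x_0\in\mathrm{int}(\Om)$ and $c=0$ cannot occur, since a concave $u\ge0$ with $u\rfloor_{\pl\Om}=0$ vanishing at an interior point vanishes along every chord through it, hence identically. Combining the cases yields the corollary. There is no deep obstacle: the substance is the supporting-plane inequality above and the short verification that an optimizer reaches height $M$ — the latter being essential, since without it the claim about the top level set would already fail for the quadrangle case.
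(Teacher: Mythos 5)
Your proof is correct and follows the same route the paper intends: the paper derives the corollary directly from Theorem~\ref{t0}(c) via the observation in Remark~\ref{z1}, and your supporting-plane computation is exactly the geometric content of that observation (the admissible configurations $(\pi/4,0)$, $(0,-\pi/4)$ and the quadrangle all force $u\le u(x_0)$ on $\Om$ with near-equality failing off the edge line, so the point sits on $\pl\LLL$). You also supply two details the paper leaves implicit — the scaling argument showing $\max_\Om u=M$, and the separate treatment of ridge points over $\pl\Om$, which Theorem~\ref{t0} does not cover since it assumes $x\in\mathrm{int}(\Om)$ — both of which are needed for the statement as written and are handled correctly.
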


         \begin{figure}[h]
\centering
\hspace*{6mm}
\includegraphics[scale=0.2]{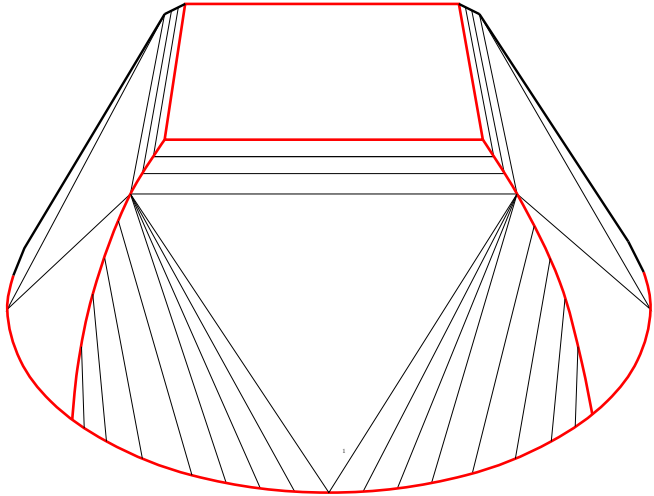} 
\caption{An optimal body with the upper level set being a square.}
\label{fig square body}
\end{figure}

\begin{zam}\label{z2}
Numerical simulation \cite{W} indicates that if $\Om$ is a unit circle and $M \lesssim 1.5$ then the ULS of an optimal body is a regular polygon and the lateral surface of the body is foliated by line segments and planar triangles. Moreover, a part of the surface near ULS is foliated by horizontal segments. See Fig.~\ref{fig square body}, where the ULS is a square. It follows from Corollary \ref{cor2} that all points of this part of surface, except for endpoints of the segments, are regular.

Further, there are several singular curves on the lateral surface joining the endpoints of the polygon with points of the base $\pl\Om \times \{ 0 \}$. Each point of such a curve is a ridge point and satisfies $\theta \ne 0,\, \vphi_1 = -\vphi_2$. Thus, we come to the following conjecture.

\begin{conjecture}
If $\Om$ is a unit circle and the ULS has nonempty interior then all singular points outside ULS are ridge points and satisfy $\theta \ne 0,\, \vphi_1 = -\vphi_2$.
\end{conjecture}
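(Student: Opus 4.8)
The plan is to decompose the statement into three assertions, settle the easy one directly from Corollary \ref{cor2}, and reduce the remaining two to local optimality statements; I expect (A) below to be a technical hurdle and (C) to be the genuine obstacle --- which is presumably why the statement is only a conjecture.

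\emph{Decomposition.} Let $r_0=(x_0,u(x_0))$ be a singular point of an optimal body $C=C_u$ with $x_0\in\mathrm{int}(\Om)$ and $u(x_0)<M$ (the natural reading of ``outside ULS'', matching how ridge and conical points are set up in this paper). Since $\mathrm{int}\,\LLL\neq\emptyset$ and $M>0$, the function $u$ is strictly positive on $\mathrm{int}(\Om)$: if $u$ vanished at some interior point, concavity of $t\mapsto u(x_0+tv)$ together with $u\ge0$ would force $u\equiv0$ on a neighbourhood of that point, hence, by connectedness, on all of $\Om$, contradicting $\mathrm{int}\,\LLL\neq\emptyset$. Thus the zero level set is exactly $\Om\times\{0\}$, with relative boundary $\pl\Om\times\{0\}$ disjoint from $\mathrm{int}(\Om)$. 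It now suffices to prove: \textbf{(A)} $C$ has no conical point with $x_0\in\mathrm{int}(\Om)$, $u(x_0)<M$; \textbf{(B)} every ridge point with $x_0\in\mathrm{int}(\Om)$, $u(x_0)<M$ has $\theta\neq0$; \textbf{(C)} every such ridge point has $\vphi_1=-\vphi_2$. Assertion (B) is immediate from Corollary \ref{cor2}: a ridge point with horizontal edge lies on $\pl\LLL$ or on $\pl\Om\times\{0\}$; the second is excluded for $x_0\in\mathrm{int}(\Om)$, and the first forces $u(x_0)=M$.

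\emph{Assertion (A).} The conical points are precisely the extreme points of $C$ lying on the graph of $u$ (the tangent cone at a conical point is nondegenerate, so its apex is extreme; by \cite{nose stretching} no extreme point is regular; and a ridge point, being interior to an edge, is not extreme). Hence (A) asserts that $C$ has no extreme point $(x_0,z_0)$ with $x_0\in\mathrm{int}(\Om)$ and $z_0<M$, which I would prove by a nose-stretching perturbation in the spirit of \cite{nose stretching}: given such an extreme point, set $C_\ve=\mathrm{conv}\big(C\cup\{(x_0,z_0+\ve)\}\big)$; since $u<M$ near $x_0$ and $x_0\in\mathrm{int}(\Om)$, for small $\ve$ this equals $C_{u_\ve}$ for some $u_\ve\in\CCC_M$ with $u_\ve=u$ off a small disc $D_\ve\ni x_0$, and one must show $F(u_\ve)<F(u)$. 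Modelling the 1D case, where raising an interior kink of a concave profile makes it strictly steeper on both sides of the kink, so that the integrand $(1+(\cdot)^2)^{-1}$ decreases pointwise there, the 2D statement should follow from a comparison of $\nb u_\ve$ and $\nb u$ on $D_\ve$ (the relevant gradients lie in $\{|p|\ge1\}$ by \cite{BFK} and the local cone structure), using the strict monotonicity and convexity of $t\mapsto(1+t^2)^{-1}$ and the explicit resistance of a cone obtained from the 2D auxiliary problems solved earlier. Making this comparison rigorous in two dimensions --- in particular, checking that the new ruled surface over $D_\ve$ is everywhere at least as steep as the old one, and strictly steeper on a set of positive measure --- is the technical hurdle here.

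\emph{Assertion (C).} By (A) and (B), $r_0$ is a ridge point with $\theta\neq0$, lying on a singular curve $\gamma$; by \cite{nose stretching} its two dihedral faces lie on developable pieces of the surface, and from the normals $e_1,e_2$ their gradients are $p_i=\big(\tan\vphi_i/\cos\theta,\,-\tan\theta\big)$ in the adapted coordinates, so $p_1-p_2$ is orthogonal to the projection $\bar l$ of the edge $l$. In these terms $\vphi_1=-\vphi_2$ is equivalent to the dihedral angle being symmetric about the vertical plane through $l$, and to the half-sum $\tfrac12(p_1+p_2)=(0,-\tan\theta)$ being parallel to $\bar l$. I would try to derive this from a first-variation condition for the admissible deformation that slides a short arc of $\gamma$ horizontally in the direction perpendicular to its tangent, keeping $u$ concave and in $[0,M]$: writing $F$ as an integral over the pencil of planes orthogonal to $\gamma$ of the 2D cross-sectional resistances, analysed via Theorem \ref{t0}, and using that $\Om$ is a disc and that $u|_{\pl\Om}=0$ is rotation-invariant, so that no anisotropy can bias the ridge, the stationary configuration ought to be the reflection-symmetric one, $p_1+p_2\parallel\bar l$. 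This is the genuine obstacle: cross-sectional optimality (Theorem \ref{t0}) pins $(\vphi_1,\vphi_2)$ only to a two-dimensional region, and cutting it down to the one-dimensional locus $\vphi_1=-\vphi_2$ appears to need essentially global information on how $\Om$, the level set $\LLL$, and the two developable pieces interlock --- information that the cross-sectional methods of the present paper do not seem to reach.
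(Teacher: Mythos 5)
This statement is not a theorem of the paper but an open conjecture: the paper offers no proof, only numerical evidence from \cite{W} and the observation (Remark~\ref{z1}, Corollary~\ref{cor2}) that horizontal-edge ridge points are confined to the top and zero level sets. Measured against that, your proposal establishes only the easy part. Your assertion (B) is correct and is essentially the paper's own Corollary~\ref{cor2} combined with the (correct) remark that $u>0$ on $\mathrm{int}(\Om)$ when the ULS has nonempty interior; this much is already in the paper. Assertions (A) and (C), which you yourself flag as ``the technical hurdle'' and ``the genuine obstacle,'' are precisely the open content of the conjecture, and your text for them is a research plan, not an argument: in (A) the key comparison (that the nose-stretched ruled surface over $D_\ve$ is at least as steep as the old one, strictly steeper on positive measure, and that this forces $F(u_\ve)<F(u)$) is nowhere carried out, and note that the cited result of \cite{nose stretching} excludes extreme points only inside open sets where $u$ is regular, so it gives no leverage at a singular point; in (C) the ``slide the ridge and invoke symmetry of $\Om$'' argument is not formulated as an admissible variation, and the heuristic that rotational invariance of $\Om$ forbids an asymmetric ridge is not a proof --- optimal bodies are not known to be unique or to inherit the symmetry of $\Om$, so no first-variation identity is actually derived.

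Two further inaccuracies in (A) deserve mention. The identification ``conical points $=$ extreme points on the graph'' is wrong in both directions: an extreme point need not be singular (the nose-stretching theorem only rules this out on open sets of regularity), and a ridge point can perfectly well be extreme --- locally a boundary behaving like $z=-|x_1|-x_2^2$ near the point has a dihedral tangent cone yet contains no segment through the point. What is true, and what your perturbation scheme would actually need, is only the one implication that a conical point is extreme. So even the reduction underlying (A) is not sound as stated. In sum: there is a genuine gap --- the proposal proves nothing beyond what Corollary~\ref{cor2} already gives, and the conjecture remains open, exactly as the paper presents it.
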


On the other hand, if the ULS is a line segment or $\Om$ is not a circle, the structure of singular points on the lateral surface is unclear.
\end{zam}

\section{Surface area measure of convex bodies}

Here we provide some information concerning the surface area measure of convex bodies and representation of the resistance in terms of surface area measure, which will be needed later on.

Let $C$ be a convex body in $\RRR^d$. Denote by $n_r$ the outward normal to $C$ at a regular point $r \in \pl C$. The surface measure of $C$ is the Borel measure $\nu_C$ in $S^{d-1}$ defined by
$$
\nu_C(A) = \text{Leb}(\{ r \in \pl C : n_r \in A \})
$$
for any Borel set $A \subset S^2$, where Leb means the $(d-1)$-dimensional Lebesgue measure on $\pl C$. We will only need the cases $d=2$ and 3.

It is well known that the surface area measure satisfies the equation
$$
\int_{S^{d-1}} n\, d\nu_C(n) = \vec 0.
$$

In a similar way one defines the measure induced by a Borel subset of $\pl C$.

The functional $F(u)$ in \eqref{Resist} can be represented in terms of surface area measure as $F(u) = \FFF(C_u)$, where
$$
\FFF(C) = \int_{S^2} (n_3)_+^3\, d\nu_C(n).
$$
Here $n = (n_1, n_2, n_3)$ and $z_+ = \max \{ z, 0 \}$ means the positive part of a real number $z$.

A similar formula holds in the 2D case. Let $u : [a,\, b] \to \RRR$ be a concave function and $C_u$ be the convex body bounded above by the graph of $u$ and below by the segment joining the points $(a, u(a))$ and $(b, u(b))$. Then we have $F(u) = \FFF(C_u)$, where
$$
F(u) = \int_a^b \frac{1}{1 + u'^2(x)}\, dx
$$
and
$$
\FFF(C) = \int_{S^1} (n_3)_+^3\, d\nu_C(n),
$$
with $n = (n_1, n_3)$.

Let $\mu_C$ be the push-forward measure of $\nu_C$ under the map from $S^1$ to $(-\pi,\, \pi]$ given by $(-\sin\vphi, \cos\vphi) \mapsto \vphi$. Then the latter formula can be rewritten as
$$
\FFF(C) = \int_{-\pi/2}^{\pi/2} (\cos\vphi)^3\, d\mu_C(\vphi).
$$

\section{2D problems of minimal resistance}

Here we state and solve some auxiliary 2-dimensional problems of minimal resistance. They will be used in the next section.

The direct generalization of Newton's problem to the 2D case is as follows: given $M > 0$, minimize the integral $\int_{0}^1 (1 + u'^2(x))^{-1} dx$ in the class of concave functions $u : [0,\, 1] \to \RRR$ such that $u(0) = M$, $u(1) = 0$ and $u'(x) \le 0$. This problem and its solution were stated in \cite{BK}; the solution is
$$
u(x) =
\left\{
\begin{array}{ll}
\min \{ M,\, 1-x \}, & \text{if } \ M < 1, \\
 M(1 - x), & \text{if } \ M \ge 1.
 \end{array}
 \right.
$$

We will consider the problem in a slightly different form. Suppose that we are given 4 real numbers $x_0 > 0,\, z_0,\, \kkk_1,\, \kkk_2$ such that $\kkk_2 < z_0/x_0 < \kkk_1$ and $x_0^2 + z_0^2 = 1$, and denote $\KKK = z_0/x_0$.

\begin{quote}
{\bf Problem.} \ Minimize the integral
\beq\label{pro2D}
\int_0^{x_0} \frac{1}{1 + u'^2(x)}\, dx
\eeq
in the class $\CCC_{\KKK,\kkk_1,\kkk_2}$ of concave functions $u : [0,\, x_0] \to \RRR$ such that $u(0) = 0$, $u(x_0) = z_0$, and $\kkk_2 \le u'(x) \le \kkk_1$.
\end{quote}

The problem can be interpreted as minimizing the resistance in the class of planar convex bodies that are contained in the triangle $ABC$ and contain the points $A = (0,0)$ and $B = (x_0, z_0)$. The slopes of the sides $AC$ and $BC$ are $\kkk_1$ and $\kkk_2$, respectively; see Fig.~\ref{fig triang}. It is assumed that there is a flow incident on the body moving downward in the $z$-direction.

                                       \begin{figure}[h]
\centering
\includegraphics[scale=0.15]{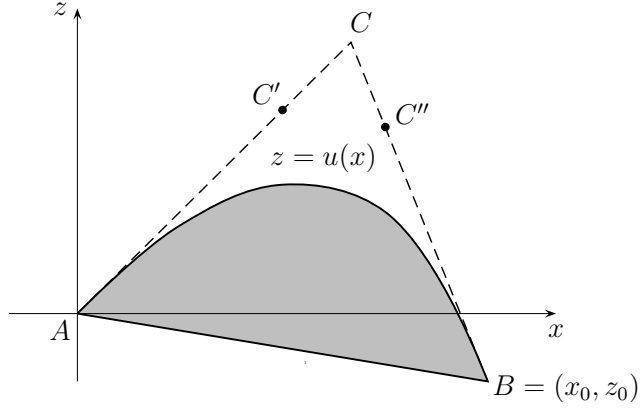}
\caption{The triangle $ABC$ and a typical convex body from the chosen class.}
\label{fig triang}
\end{figure}

\begin{lemma}\label{l 2D}
The solution $u$ to problem \eqref{pro2D} is unique.

(i) Let $-\sqrt{1 + \kkk_1^2} \le \kkk_1 + \kkk_2 \le \sqrt{1 + \kkk_2^2}$. Then the derivative $u'(x)$ takes two values, $\kkk_1$ and $\kkk_2$, and therefore, $u(x) = \min \{ \kkk_1 x,\, z_0 + \kkk_2(x - x_0) \}$, and the graph of $u$ is the broken line $ACB$.

(ii) Let $\kkk_1 \le -1/\sqrt 3$ or $\kkk_2 \ge 1/\sqrt 3$. Then $u(x) = \KKK x$, and therefore, the graph of $u$ is the segment $AB$.

(iii) Let $\kkk_2 < 1/\sqrt 3$,\, $\kkk_1 > \sqrt{1 + \kkk_2^2} - \kkk_2$. Then

\hspace*{4mm}(a) if $\kkk_2 < \KKK < \sqrt{1 + \kkk_2^2} - \kkk_2$ then $u'(x)$ takes two values, $\kkk_2$ and $\sqrt{1 + \kkk_2^2} - \kkk_2$,
and the graph of $u$ is the broken line $AC''B$, where $C''$ is a point on $CB$;

\hspace*{4mm}(b) if $\sqrt{1 + \kkk_2^2} - \kkk_2 \le \KKK < \kkk_1$ then $u(x) = \KKK x$.

(iv) Let $\kkk_1 > -1/\sqrt 3$,\, $\kkk_2 < -\sqrt{1 + \kkk_1^2} - \kkk_1$. Then

\hspace*{4mm}(a) if $-\sqrt{1 + \kkk_1^2} - \kkk_1 < \KKK < \kkk_1$ then $u'(x)$ takes the values $\kkk_1$ and $-\sqrt{1 + \kkk_1^2} - \kkk_1$,
and the graph of $u$ is the broken line $AC'B$, where $C'$ is a point on $AC$;

\hspace*{4mm}(b) if $\kkk_2 < \KKK \le -\sqrt{1 + \kkk_1^2} - \kkk_1$ then $u(x) = \KKK x$.
\end{lemma}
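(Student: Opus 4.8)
The plan is to reduce the variational problem to a one–dimensional convex–envelope computation, and then to certify optimality and uniqueness by a ``calibration'' (a supporting affine function).

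\medskip

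\noindent\textbf{Step 1: reformulation.} Since $u$ is concave, $p:=u'$ is non-increasing on $[0,x_0]$, the constraints become $p\in[\kkk_2,\kkk_1]$ a.e.\ and $\int_0^{x_0}p\,dx=z_0$ (equivalently, $p$ has mean $\KKK$), and $F(u)=\int_0^{x_0}g(p(x))\,dx$ with $g(s)=(1+s^2)^{-1}$, while $u(x)=\int_0^x p$. The value of $F$ depends only on the distribution $\mu=\frac1{x_0}\,p_*\mathrm{Leb}$ of $p$, and conversely every probability measure $\mu$ on $[\kkk_2,\kkk_1]$ with barycenter $\KKK$ is the distribution of a \emph{unique} non-increasing $p$, namely its decreasing rearrangement (the hypothesis $\kkk_2<\KKK<\kkk_1$ guarantees the feasible set is nonempty). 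Hence \eqref{pro2D} is equivalent to minimizing the linear functional $\mu\mapsto\int g\,d\mu$ over this weak-$*$ compact convex set; a minimizer exists, and some minimizer is an extreme point, hence supported on at most two atoms. The minimum value equals $\hat g(\KKK)$, where $\hat g$ is the lower convex envelope of $g$ restricted to $[\kkk_2,\kkk_1]$.

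\medskip

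\noindent\textbf{Step 2: structure of $g$ and the tangency points.} One computes $g''(s)=\dfrac{2(3s^2-1)}{(1+s^2)^3}$, so $g$ is convex on $(-\infty,-1/\sqrt3\,]$ and on $[\,1/\sqrt3,\infty)$ and concave on $[-1/\sqrt3,1/\sqrt3]$; since $g'>0$ on $(-\infty,0)$ and $g'<0$ on $(0,\infty)$, $g$ admits no bitangent line with both contact points in the two convex arcs. Therefore on any interval $\hat g$ consists of at most: a sub-arc of $g$ in the left convex region, one chord issuing from the left endpoint (or from a tangency point), and a sub-arc of $g$ in the right convex region, and symmetrically. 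The key elementary fact is that the chord through $(\kkk_2,g(\kkk_2))$ that is tangent to $g$ on the right touches at $c^{*}=\sqrt{1+\kkk_2^2}-\kkk_2$ (and $>1/\sqrt3$), and by the symmetry $s\mapsto -s$ the chord through $(\kkk_1,g(\kkk_1))$ touches on the left at $-\sqrt{1+\kkk_1^2}-\kkk_1$; both are verified by substituting into $\frac{g(c)-g(\kkk_2)}{c-\kkk_2}=g'(c)$ and simplifying. Now the hypotheses of (i)--(iv) translate exactly into the position of $\kkk_1,\kkk_2,\KKK$ relative to $\pm1/\sqrt3$ and to these two tangency abscissae: e.g.\ $\kkk_1+\kkk_2\le\sqrt{1+\kkk_2^2}$ means $\kkk_1\le c^{*}$ (tangency point outside $[\kkk_2,\kkk_1]$, so $\hat g$ is the single chord $\overline{AB}$'s counterpart), while $\kkk_1>\sqrt{1+\kkk_2^2}-\kkk_2$ in case (iii) means $c^{*}\in(\kkk_2,\kkk_1)$, and then one further distinguishes $\KKK<c^{*}$ from $\KKK\ge c^{*}$.

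\medskip

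\noindent\textbf{Step 3: calibration, optimality, uniqueness.} For the claimed solution let $S\subset[\kkk_2,\kkk_1]$ be the (one- or two-point) set of values of $u'$. I would write down the affine function $\ell(s)=\al s+\bt$ which is the relevant affine piece of $\hat g$ at $\KKK$, and check that $g\ge\ell$ on $[\kkk_2,\kkk_1]$ with equality exactly on $S$. On a convex arc this is convexity; on the concave arc $[\kkk_2\vee(-1/\sqrt3),\,\kkk_1\wedge(1/\sqrt3)]$ it suffices to verify $\ell$ lies below $g$ at the two endpoints of that arc, since a concave function exceeding an affine one at both ends of an interval exceeds it throughout. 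Given such an $\ell$, for every admissible $u$
\[
F(u)=\int_0^{x_0}g(u')\,dx\ \ge\ \int_0^{x_0}\ell(u')\,dx=\al z_0+\bt x_0,
\]
and the right-hand side is independent of $u$ and is attained by the claimed $u^{*}$ (whose derivative takes values in $S$); hence $u^{*}$ is optimal. If $g>\ell$ off $S$ (which holds in every case, $g-\ell$ being real-analytic and not identically zero on any subinterval), equality forces $u'\in S$ a.e.; since $u'$ is non-increasing with prescribed mean $\KKK$, this pins down $u'$ uniquely ($\#S=1$ is immediate; for $S=\{a,b\}$ the two levels and the single jump location are determined by the mean), giving uniqueness. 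The geometric descriptions follow because, $u'$ being non-increasing, the larger slope occurs near $A$ and the smaller near $B$: e.g.\ in (iii)(a) the slope near $B$ is $\kkk_2$, which is the slope of $CB$, so $C''\in CB$, whereas near $A$ the slope is $c^{*}\ne\kkk_1$.

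\medskip

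\noindent\textbf{Main obstacle.} The conceptual picture is just the convex envelope of $g$; the work is the bookkeeping. The delicate points are (a) verifying the inequality $g\ge\ell$ on \emph{all} of $[\kkk_2,\kkk_1]$ in each sub-case, and (b) checking that the hypotheses of (i)--(iv) tile the parameter region $\{\kkk_2<\KKK<\kkk_1\}$ with no gaps and no contradictory overlaps, including the seam values such as $\KKK=c^{*}$ or $\kkk_1=-1/\sqrt3$ where two cases meet; a third, minor, point is making the correspondence between measures with barycenter $\KKK$ and non-increasing derivatives fully rigorous (measurability of the rearrangement and the boundary normalization $u(0)=0$).
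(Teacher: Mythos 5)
Your proposal is correct and follows essentially the same route as the paper: both reduce the problem to the lower convex envelope of $1/(1+s^2)$ on $[\kkk_2,\kkk_1]$, use the same convexity/concavity analysis and the same tangency computation yielding $\sqrt{1+\kkk_2^2}-\kkk_2$, and certify optimality and uniqueness via the envelope (your affine calibration is just the supporting line of the envelope at $\KKK$, equivalent to the paper's combination of $\bar p\le p$ with Jensen's inequality). The remaining work you flag is indeed only bookkeeping, which the paper carries out case by case.
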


\begin{proof}
Denote by $p(\xi)$ the restriction of the function $1/(1 + \xi^2)$ on the segment $[\kkk_2,\, \kkk_1]$, and by $\bar p(\xi)$, the maximal convex function on $[\kkk_2,\, \kkk_1]$ satisfying $\bar p(\xi) \le p(\xi)$. In other words, the epigraph of $\bar p$ is the convex hull of the epigraph of $p$.

One easily checks that the function $1/(1 + \xi^2)$ is convex on the intervals $(-\infty,\, -1/\sqrt 3]$ and $[1/\sqrt 3,\, +\infty)$ and concave on the interval $[-1/\sqrt 3,\, 1/\sqrt 3]$. Note that the graph of $\bar p$ cannot contain two strictly convex arcs separated by a line segment; otherwise the line containing this segment is tangent to the graph of $z = 1/(1 + \xi^2)$ at two points, which is impossible.

Likewise, the graph of $\bar p$ does not contain two line segments separated by a (strictly convex) arc of graph$(p)$. Indeed, otherwise the endpoints of each segment bound a part of graph$(p)$ containing at least one point where $p'' < 0$. This means that there exist points $\xi_1 < \xi_2 < \xi_3$ such that $p''(\xi_1) < 0,\, p''(\xi_2) > 0,\, p''(\xi_3) < 0$, which is impossible.

Therefore there are 4 possibilities for graph$(\bar p)$:
\vspace{1mm}

(i) it is the line segment joining the endpoints of graph$(p)$;
\vspace{1mm}

(ii) it coincides with graph$(p)$;
\vspace{1mm}

(iii) it is the union of a line segment on the left and a part of graph$(p)$ on the right;
\vspace{1mm}

(iv) vice versa: it is the union of a part of graph$(p)$ on the left and a line segment on the right.
\vspace{1mm}

The case (ii) is realized, if and only if the intervals $[\kkk_2,\, \kkk_1]$ and $(-1/\sqrt 3,\, 1/\sqrt 3)$ do not intersect, that is, either $\kkk_2 < \kkk_1 \le -1/\sqrt 3$ or $1/\sqrt 3 \le \kkk_2 < \kkk_1$. Let us consider conditions for realization of cases (iii) and (iv).

The necessary and sufficient conditions for graph$(\bar p)$ to be the union of a line segment projecting to $[\kkk_2,\, \bar\kkk]$ and a part of graph$(p)$ projecting to $[\bar\kkk,\, \kkk_1]$ are that $\kkk_2 < 1/\sqrt 3$,\, $\bar\kkk > 1/\sqrt 3$, and the straight line through $(\kkk_2, p(\kkk_2))$ and $(\bar\kkk, p(\bar\kkk))$ is tangent to graph$(p)$ at the point $(\bar\kkk, p(\bar\kkk))$. The condition of tangency looks as follows:
$$
\frac{p(\bar\kkk) - p(\kkk_2)}{\bar\kkk - \kkk_2} = p'(\bar\kkk).
$$
Using that $p(\xi) = 1/(1 + \xi^2)$, after a simple algebra one obtains the equation $\bar\kkk^2 + 2\kkk_2\bar\kkk - 1 = 0$, which has two solutions. The solution $\bar\kkk = -\kkk_2 - \sqrt{1 + \kkk_2^2} < 0 < 1/\sqrt 3$ does not serve, therefore we have $\bar\kkk = -\kkk_2 + \sqrt{1 + \kkk_2^2}$. One easily sees that $\bar\kkk > 1/\sqrt 3$.

Thus, the conditions (iii) for graph$(\bar p)$ be composed of a line segment on the left and a strictly convex part on the right are:
$$
\kkk_2 < 1/\sqrt 3, \qquad \kkk_1 > -\kkk_2 + \sqrt{1 + \kkk_2^2}.
$$
In a similar way one obtains that the conditions (iv) for graph$(\bar p)$ to be the union of a a strictly convex part on the left and line segment on the right are:
$$
\kkk_1 > -1/\sqrt 3, \qquad \kkk_2 < -\kkk_1 - \sqrt{1 + \kkk_1^2}.
$$
The resting part corresponding to case (i) is described by the inequalities
$$
-\sqrt{1 + \kkk_1^2} \le \kkk_1 + \kkk_2 \le \sqrt{1 + \kkk_2^2};
$$
see Fig.~\ref{fig2Du}.

                                       \begin{figure}[h]
\centering
\includegraphics[scale=0.15]{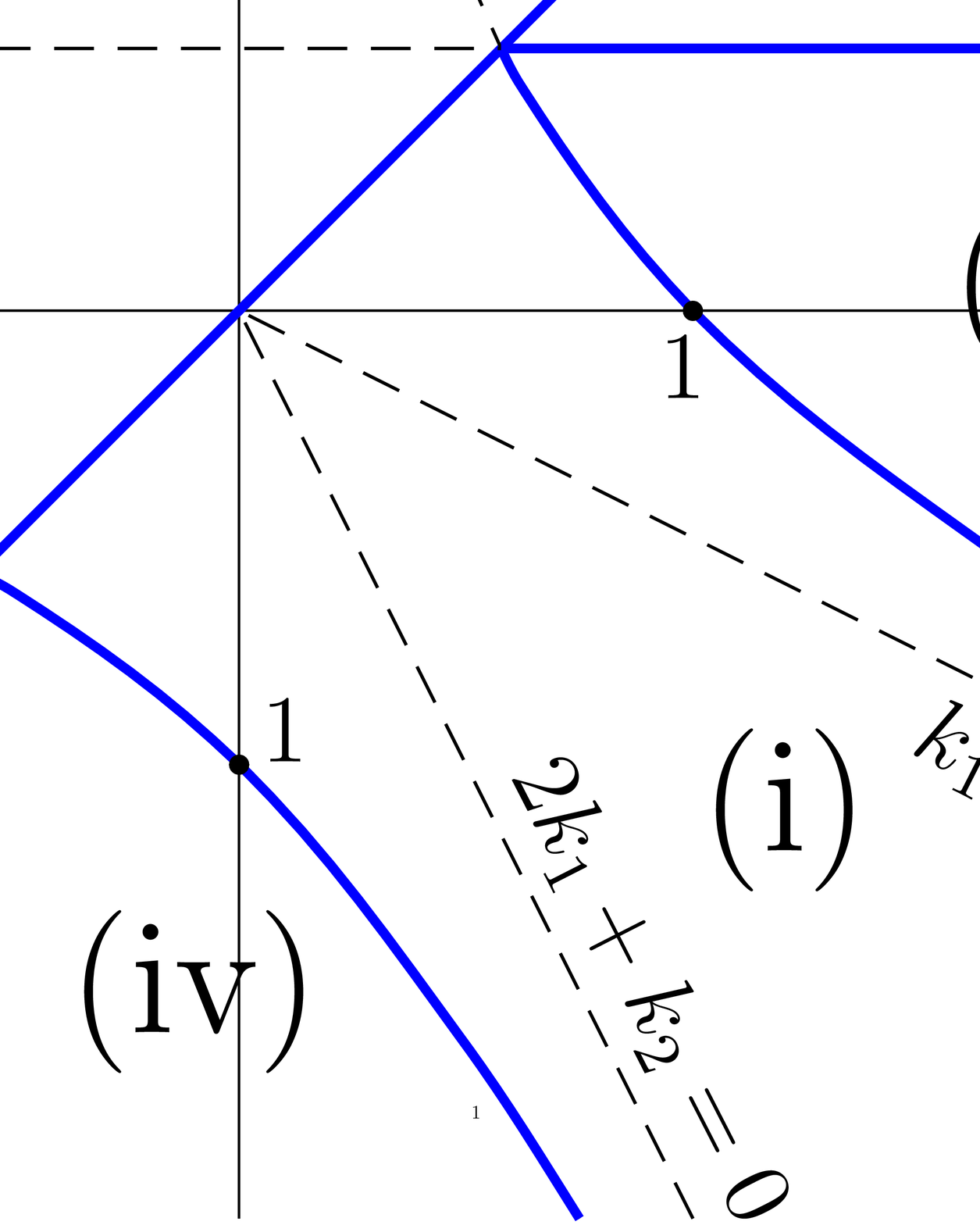}
\caption{The domains (i), (ii), (iii), (iv).}
\label{fig2Du}
\end{figure}

In the case (i) the value $\int_0^{x_0} \bar p(u'(x))\, dx$ is constant for all $u \in \CCC_{\KKK,\kkk_1,\kkk_2}$. We have the inequality $\bar p \le p$, with the equality being attained only at $\kkk_1$ and $\kkk_2$. Therefore,
$$
\int_{0}^{x_0} \bar p(u'(x))\, dx \le \int_{0}^{x_0} p(u'(x))\, dx,
$$
and the equality is attained {\it iff} $u'$ takes only the values $\kkk_1$ and $\kkk_2$. Thus, claim (i) of the lemma is proved.

In the case (ii) the function $p$ is strictly convex, and by Jensen's inequality,
$$
\int_{0}^{x_0} p(u'(x))\, dx \ge x_0\, p(\KKK),
$$
with the equality being attained {\it iff} $u'(x) = \KKK$ for all $x$. Thus, claim (ii) is also proved.

In the case (iii) the function $\bar p$ is linear on $[\kkk_2,\, \bar \kkk]$ and strictly convex on $[\bar \kkk,\, \kkk_1]$, with $\bar\kkk = -\kkk_2 + \sqrt{1 + \kkk_2^2}$. Besides, $\bar p = p$ on $\{ \kkk_2 \} \cup [\bar\kkk,\, \kkk_1]$, and $\bar p < p$ on $(\kkk_2,\, \bar\kkk)$. We have
$$
\int_{0}^{x_0} p(u'(x))\, dx \ge \int_{0}^{x_0} \bar p(u'(x))\, dx \ge x_0\, \bar p(\KKK).
$$
The former inequality becomes equality {\it iff} $u'(x) \in \{ \kkk_2 \} \cup [\bar\kkk,\, \kkk_1]$ for all $x$. In the case (iii)(a), $\kkk_2 < \KKK < \bar\kkk$, the latter inequality becomes equality {\it iff} $u' \in [\kkk_2,\, \bar \kkk]$. In the case (iii)(b), $\bar\kkk \le \KKK < \kkk_1$, the latter inequality becomes equality {\it iff} $u' = \KKK$. We conclude that the integral $\int_{0}^{x_0} p(u'(x))\, dx$ takes its minimal value $x_0\, \bar p(\KKK)$, if $u'$ takes the values $\kkk_2$ and $\bar\kkk$ in the case (iii)(a), and if $u' = \KKK$ in the case (iii)(b). Claim (iii) is proved.

The proof of claim (iv) is completely analogous to the proof of claim (iii). Lemma \ref{l 2D} is proved.
\end{proof}

One can reformulate these results in terms of the surface measure.

Denote $\vphi_0 = \arctan\KKK$, $\vphi_1 = \arctan\kkk_1$, $\vphi_2 = \arctan\kkk_2$. We have $\vphi_2 < \vphi_0 < \vphi_1$. We shall use the notation $\eee_\vphi = (-\sin\vphi, \cos\vphi) \in S^1$. The 2-dimensional convex body bounded below by the segment $AB$ and above by the graph of $u$ induces the measure $-\del_{\vphi_0} + \mu_u$, where the measure $\mu = \mu_u$ is supported on $[\vphi_2,\, \vphi_1]$ and satisfies the relation
\beq\label{mu}
\int_{\vphi_2}^{\vphi_1} e_\vphi\, d\mu(\vphi) = e_{\vphi_0}.
\eeq

Denote by $\MMM_{\vphi_0,\vphi_1,\vphi_2}$ the set of measures $\mu$ on $[\vphi_2,\, \vphi_1]$ satisfying \eqref{mu}. It is well known that there is a one-to-one correspondence between the set of measures $\MMM_{\vphi_1,\vphi_2,\vphi_0}$ and the set of functions $\CCC_{\KKK,\kkk_1,\kkk_2}$.

Now problem \eqref{pro2D} can be reformulated as follows.

\begin{quote}
{\bf Problem.} \ Given $-\pi/2 \le \vphi_2 < \vphi_0 < \vphi_1 \le \pi/2$, minimize the integral
\beq\label{pro2Dmeas}
\Phi(\mu) = \int_{\vphi_2}^{\vphi_1} (\cos\vphi)^3\, d\mu(\vphi)
\eeq
in the class of measures $\MMM_{\vphi_0,\vphi_1,\vphi_2}$.
\end{quote}

Choose $\lam_1 > 0$ and $\lam_2 > 0$ so as $\lam_1 \eee_{\vphi_1} + \lam_2 \eee_{\vphi_2} = \eee_{\vphi_0}$.

Define the values $\bar\lam_1$ and $\bar\lam_2$ in the two particular cases, which are indicated as (iii)(a) and (iv)(a) in the following Lemma \ref{l 2D meas} and cannot occur simultaneously.

If $\vphi_2 <  \pi/6$,\, $2\vphi_1 + \vphi_2 > \pi/2$, and $\vphi_2 < \vphi_0 < (\pi-2\vphi_2)/4$ (case (iii)(a)\,), choose $\bar\lam_1 > 0$ and $\bar\lam_2 > 0$ so as
$$\bar\lam_1 \eee_{(\pi-2\vphi_2)/4} + \bar\lam_2 \eee_{\vphi_2} = \eee_{\vphi_0}.$$

If $\vphi_1 > -\pi/6$,\, $\vphi_1 + 2\vphi_2 < -\pi/2$, and $-(\pi + 2\vphi_1)/4 < \vphi_0 < \vphi_1$ (case (iv)(a)\,), choose $\bar\lam_1 > 0$ and $\bar\lam_2 > 0$ so as
$$\bar\lam_1 \eee_{\vphi_1} + \bar\lam_2 \eee_{-(\pi + 2\vphi_1)/4} = \eee_{\vphi_0}.$$

The following Lemma \ref{l 2D meas} is just a reformulation of Lemma \ref{l 2D} in terms of the angles $\vphi_0$, $\vphi_1$, $\vphi_2$.

\begin{lemma}\label{l 2D meas}
The solution $\mu$ to problem \eqref{pro2Dmeas} is unique.

(i) Let $2\vphi_1 + \vphi_2 \le \pi/2$ and $\vphi_1 + 2\vphi_2 \ge -\pi/2$; then $\mu = \lam_1 \del_{\eee_{\vphi_1}} + \lam_2 \del_{\eee_{\vphi_2}}$.

(ii) Let $\vphi_1 \le -\pi/6$ or $\vphi_2 \ge \pi/6$; then $\mu = \del_{\eee_{\vphi_0}}$.

(iii) Let $\vphi_2 <  \pi/6$ and $2\vphi_1 + \vphi_2 > \pi/2$; then

\hspace*{4mm}(a) if $\vphi_2 < \vphi_0 < (\pi-2\vphi_2)/4$ then $\mu = \bar\lam_1 \del_{\eee_{(\pi-2\vphi_2)/4}} + \bar\lam_2 \del_{\eee_{\vphi_2}}$.

\hspace*{4mm}(b) if $(\pi-2\vphi_2)/4 \le \vphi_0 < \vphi_1$ then $\mu = \del_{\eee_{\vphi_0}}$.

(iv) Let $\vphi_1 > -\pi/6$, $\vphi_1 + 2\vphi_2 < -\pi/2$; then

\hspace*{4mm}(a) if $-(\pi + 2\vphi_1)/4 < \vphi_0 < \vphi_1$ then $\mu = \bar\lam_1 \del_{\eee_{\vphi_1}} + \bar\lam_2 \del_{\eee_{-(\pi + 2\vphi_1)/4}}$;

\hspace*{4mm}(b) if $\vphi_2 < \vphi_0 \le -(\pi + 2\vphi_1)/4$ then $\mu = \del_{\eee_{\vphi_0}}$.

See Fig.~\ref{fig2Dmeas}.
\end{lemma}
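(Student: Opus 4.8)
\emph{Proof plan.} The plan is to deduce Lemma \ref{l 2D meas} from Lemma \ref{l 2D} by the change of variables $\kkk_i = \tan\vphi_i$, $\KKK = \tan\vphi_0$, combined with the one-to-one correspondence $u \leftrightarrow \mu_u$ between $\CCC_{\KKK,\kkk_1,\kkk_2}$ and $\MMM_{\vphi_0,\vphi_1,\vphi_2}$ recalled above. Under this correspondence $F(u) = \Phi(\mu_u)$, which follows from the representation of the resistance via the surface area measure recalled above (equivalently, the arc-length element of the graph of $u$ at a point of slope $\tan\vphi$ is $dx/\cos\vphi$, so $\int(\cos\vphi)^3\,d\mu_u = \int(\cos\vphi)^2\,dx = \int(1+u'^2)^{-1}\,dx$). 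Hence minimizers of \eqref{pro2D} correspond to minimizers of \eqref{pro2Dmeas}, and in particular uniqueness of the minimizer transfers at once. It remains to rewrite, case by case, the hypotheses and the conclusions of Lemma \ref{l 2D} in terms of $\vphi_0, \vphi_1, \vphi_2$.

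I would first record the translation of the threshold slopes and of the solution shapes. Since $\vphi_1, \vphi_2 \in (-\pi/2, \pi/2)$ we have $\cos\vphi_i > 0$ and $\sqrt{1+\kkk_i^2} = 1/\cos\vphi_i$, and a half-angle computation gives $\sqrt{1+\kkk_2^2} - \kkk_2 = (1 - \sin\vphi_2)/\cos\vphi_2 = \tan\big((\pi - 2\vphi_2)/4\big)$, and symmetrically $-\sqrt{1+\kkk_1^2} - \kkk_1 = -\tan\big((\pi + 2\vphi_1)/4\big)$; both arguments $(\pi \mp 2\vphi_i)/4$ lie in $(-\pi/2,\pi/2)$, so these identities survive application of $\arctan$. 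As for the conclusions: if the optimal $u$ has $u'(x) \in \{\kkk_1, \kkk_2\}$ then $\mu_u$ is supported on $\{\vphi_1, \vphi_2\}$, and since $\eee_{\vphi_1}, \eee_{\vphi_2}$ are linearly independent and $\eee_{\vphi_0}$ lies in the open cone they span, the constraint \eqref{mu} pins down the weights, giving $\mu_u = \lam_1\del_{\eee_{\vphi_1}} + \lam_2\del_{\eee_{\vphi_2}}$ as stated; the segment solution $u = \KKK x$ gives $\mu_u = \del_{\eee_{\vphi_0}}$ of unit mass (as $|AB| = \sqrt{x_0^2+z_0^2} = 1$); and the broken-line solutions $AC''B$, $AC'B$ of Lemma \ref{l 2D} give the two-atom measures with atoms at the corresponding angles and weights $\bar\lam_1, \bar\lam_2$ again determined by \eqref{mu} (positivity of all weights being ensured by the strict interlacing $\vphi_2 < \vphi_0 < \vphi_1$ and the angular gaps being less than $\pi$).

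Finally I would translate the hypotheses. Monotonicity of $\arctan$ turns $\kkk_1 \le -1/\sqrt3$, $\kkk_2 \ge 1/\sqrt3$ into $\vphi_1 \le -\pi/6$, $\vphi_2 \ge \pi/6$, and the threshold identities turn $\kkk_1 > \sqrt{1+\kkk_2^2} - \kkk_2$ into $2\vphi_1 + \vphi_2 > \pi/2$, the condition $\kkk_2 < \KKK < \sqrt{1+\kkk_2^2} - \kkk_2$ into $\vphi_2 < \vphi_0 < (\pi - 2\vphi_2)/4$, and the analogous conditions of case (iv) into the mirror ones involving $-(\pi + 2\vphi_1)/4$. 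For case (i), multiplying $-\sqrt{1+\kkk_1^2} \le \kkk_1 + \kkk_2 \le \sqrt{1+\kkk_2^2}$ by $\cos\vphi_1\cos\vphi_2 > 0$ yields $-\cos\vphi_2 \le \sin(\vphi_1 + \vphi_2) \le \cos\vphi_1$; rewriting each inequality by the sum-to-product formula, whose common factor (namely $\cos(\pi/4 + \vphi_2/2)$, resp. $\sin(\pi/4 + \vphi_1/2)$) is manifestly positive on the range $-\pi/2 < \vphi_2 < \vphi_1 < \pi/2$, these become precisely $\vphi_1 + 2\vphi_2 \ge -\pi/2$ and $2\vphi_1 + \vphi_2 \le \pi/2$. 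The whole argument is elementary; I expect the bulk of the (still routine) effort to go into the threshold identities and, in case (i), into checking that the argument of the discriminating sum-to-product factor stays within a half-period of the relevant sine or cosine, so that the stated equivalences hold — which follows from $\vphi_i \in (-\pi/2, \pi/2)$ together with $\vphi_2 < \vphi_1$.
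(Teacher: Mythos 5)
Your proposal is correct and follows exactly the route the paper intends: the paper gives no separate proof of Lemma~\ref{l 2D meas}, declaring it ``just a reformulation of Lemma~\ref{l 2D},'' and your change of variables $\kkk_i=\tan\vphi_i$, the identity $\sqrt{1+\kkk_2^2}-\kkk_2=\tan\bigl((\pi-2\vphi_2)/4\bigr)$, and the sum-to-product treatment of case (i) supply precisely the routine details that the paper omits. No gaps.
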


                                       \begin{figure}[h]
\centering
\includegraphics[scale=0.15]{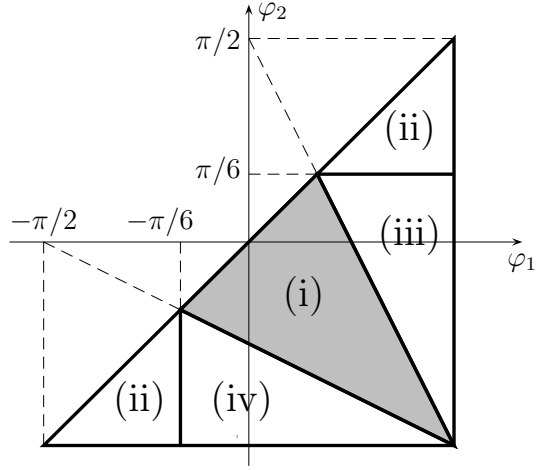}
\caption{The domains (i), (ii), (iii), (iv) in terms of $\vphi_1$ and $\vphi_2$.}
\label{fig2Dmeas}
\end{figure}

\section{Proof of Theorem \ref{t0}}

Recall that $r_0 = (x, u(x)),\, x \in \text{int}(\Om)$ is a ridge point of $C = C_u$ and $e_1$ and $e_2$ are the outward normals to the faces of the corresponding dihedral angle. Choose a vector $e$ on the smaller arc of the great circle in $S^2$ through $e_1$ and $e_2$; we have $|e| = 1$ and $e = \lam_1 e_1 + \lam_2 e_2$ for some $\lam_1 > 0$, $\lam_2 > 0$.

Let $\theta$ be the angle between a plane parallel to $e_1$ and $e_2$ and the $z$-axis. In appropriate coordinates $x_1,\, x_2$ on the $x$-plane the vectors $e_1,\, e_2,$ and $e$ have the form
$$
e_i = (-\sin\vphi_i,\, \cos\vphi_i \sin\theta,\, \cos\vphi_i \cos\theta), \quad i = 1,\ 2, \quad \vphi_2 < \vphi_1;
$$
$$
e = (-\sin\vphi_0,\, \cos\vphi_0 \sin\theta,\, \cos\vphi_0 \cos\theta) \quad \text{for some} \quad \vphi_2 < \vphi_0 < \vphi_1.
$$

Take $t > 0$ and consider the convex body
$$
C_t = C \cap \{ r : \langle r - r_0,\, e \rangle \ge -t \};
$$
it is the piece of $C$ cut off by the plane with the normal vector $e$ at the distance $t$ from $r_0$. Here $\langle \cdot \,, \cdot \rangle$ means the scalar product. The body $C_t$ is bounded by the planar domain
$$
B_t = C \cap \{ r : \langle r - r_0,\, e \rangle = -t \}
$$
and the convex surface
\beq\label{St}
S_t = \pl C \cap \{ r : \langle r - r_0,\, e \rangle \ge -t \};
\eeq
that is, $\pl C_t = B_t \cup S_t$.

Let $\nu_{S_t}$ be the measure induced by the surface $S_t$. It is proved in Theorem 2 in \cite{MMS} that there exists at least one weak partial limit of the normalized measure $\frac{1}{|B_{t}|} \nu_{S_t}$ as $t \to 0$. In other words, there exists at least one sequence $t_i \to 0^+$ as $i \to \infty$ such that $\frac{1}{|B_{t_i}|} \nu_{S_{t_i}}$ weakly converges to a measure $\nu_*$ as $i \to \infty$. Moreover, the support of the limiting measure $\nu_*$ is contained in the smaller arc of the great circle through the vectors $e_1$ and $e_2$ and contains these vectors. Additionally, one has
$$
 \int_{S^2} n\, d\nu_*(n) = e.
$$

Since the body $C$ is optimal, the following inequality holds
\beq\label{ineq}
\lim_{i\to\infty} \frac{1}{|B_{t_i}|} \big[ \FFF(C) - \FFF(C_{t_i}) \big] = \int_{S^2} (n_3)^3 d\nu_*(n) - (e_3)^3 \le 0.
\eeq

Let $\mu_*$ be the push-forward measure of $\nu_*$ under the map from the great circle through $e_1$ and $e_2$ to $(-\pi,\, \pi]$ defined by $(-\sin\vphi,\, \cos\vphi \sin\theta,\, \cos\vphi \cos\theta) \mapsto \vphi$. The support of $\mu_*$ is contained in $[\vphi_2,\, \vphi_1]$ and contains the points $\vphi_2$ and $\vphi_1$, therefore $\mu_*$ is not an atom of the form $\del_\vphi$,\, $\vphi \in (\vphi_2,\, \vphi_1)$. Additionally, one has
$$
\int_{\vphi_2}^{\vphi_1} (-\sin\vphi,\, \cos\vphi \sin\theta,\, \cos\vphi \cos\theta)\, d\mu_*(\vphi) = (-\sin\vphi_0,\, \cos\vphi_0 \sin\theta,\, \cos\vphi_0 \cos\theta).
$$
It follows that $\mu_* \in \MMM_{\vphi_0,\vphi_1,\vphi_2}$.

Using that $n_3 = \cos\vphi \cos\theta$ and $e_3 = \cos\vphi_0 \cos\theta$, the inequality in \eqref{ineq} can be rewritten in terms of $\mu_*$ as
$$
\int_{\vphi_2}^{\vphi_1} (\cos\vphi \cos\theta)^3 d\mu_*(\vphi) \le (\cos\vphi_0 \cos\theta)^3.
$$
Using the notation
$$\Phi(\mu) = \int_{\vphi_2}^{\vphi_1} (\cos\vphi)^3 d\mu(\vphi),$$
we obtain the inequality
\beq\label{i}
\Phi(\mu_*) \le \Phi(\del_{\vphi_0}). 
\eeq

By Lemma \ref{l 2D meas}, in the cases (ii), (iii), and (iv) one can choose $\vphi_0 \in (\vphi_2,\, \vphi_1)$ so as the unique minimum of $\Phi$ is attained at $\del_{\vphi_0}$, and so, $\Phi(\mu_*) > \Phi(\del_{\vphi_0})$, in contradiction with \eqref{i}. Thus, only the case (i) can be realized.

Consider three possible cases for $\theta$.
\vspace{1mm}

(a) $\theta \in [\pi/4,\, \pi/2)$. Since the case (i) in Lemma \ref{l 2D meas} is realized, we have
$$2\vphi_1 + \vphi_2 \le \pi/2 \quad \text{and} \quad \vphi_1 + 2\vphi_2 \ge -\pi/2.$$
Claim (a) of Theorem \ref{t0} is proved.
\vspace{1mm}

(b) $\theta \in (0,\, \pi/4)$. Additionally to the inequalities in (a), we use that the slope of the surface of an optimal body at any regular point is either equal to 0, or greater than or equal to $\pi/4$ (Theorem 2.3 in \cite{BFK}). It follows that the angle between $e_i$,\, $i = 1,\,2$ and the $z$-axis is either 0, or $\ge \pi/4$. Since $\theta \ne 0$, these angles cannot be equal to zero. It follows that
$$\cos\vphi_1 \cos\theta \le 1/\sqrt 2 \quad \text{and} \quad \cos\vphi_2 \cos\theta \le 1/\sqrt 2,$$
hence
$$
|\vphi_1| \ge \vphi_*, \quad |\vphi_2| \ge \vphi_*, \quad \text{where} \ \ \vphi_* = \arccos\Big( \frac{1/\sqrt 2}{\cos\theta} \Big) < \frac{\pi}{4}.
$$
The admissible set is shown light gray in Fig.~\ref{fig2Dab}\,(a). It is the disjoint union of a quadrangle and two triangles, if $\theta \ge \arccos\sqrt{2/3}$, and a quadrangle, if $0 <\theta < \arccos\sqrt{2/3}$. Claim (b) of Theorem \ref{t0} is proved.
\vspace{1mm}

(c) $\theta = 0$. Each angle between $e_i$,\, $i = 1,\, 2$ and the $z$-axis is either equal to 0, or greater than or igual to $\pi/4$. It follows that  either $\vphi_i = 0$ or $|\vphi_i| \ge \pi/4$, $i = 1,\, 2$. Taking into account the inequalities of case (a), one concludes that $(\vphi_1, \vphi_2)$ either coincides with one of the points $(\pi/4, 0)$ and $(0, -\pi/4)$, or lies in the domain $2\vphi_1 + \vphi_2 \le \pi/2,\ \vphi_1 + 2\vphi_2 \ge -\pi/2,\ \vphi_1 \ge \pi/4,\ \vphi_2 \le -\pi/4$; see Fig.~\ref{fig2Dab}\,(b). Claim (c) of Theorem \ref{t0} is proved.

                                       \begin{figure}[h]
\centering
\includegraphics[scale=0.25]{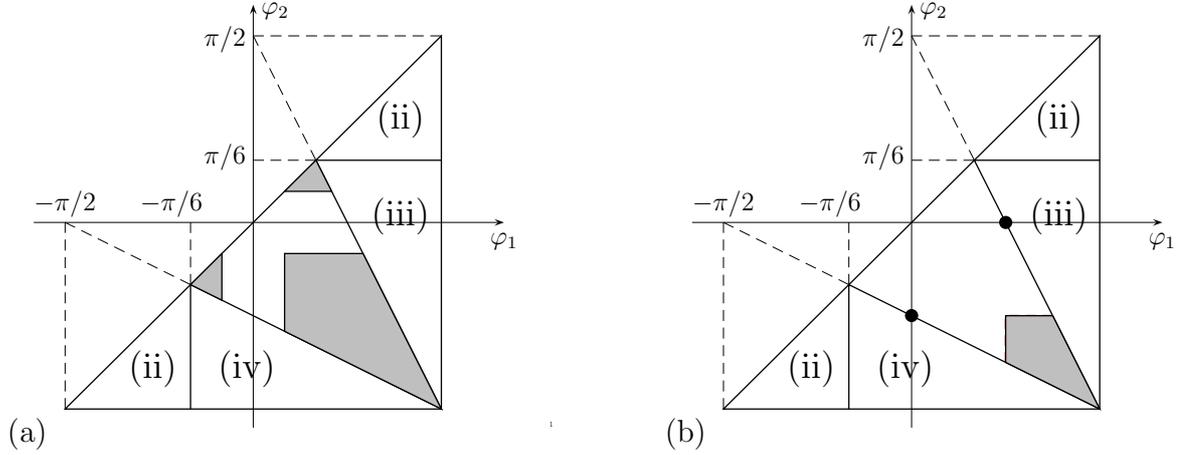}
\caption{(a) $\theta \ne 0$; (b) $\theta = 0$.}
\label{fig2Dab}
\end{figure}

\section*{Acknowledgements}

This work was supported by the Center for Research and Development in Mathematics and Applications (CIDMA) through the Portuguese Foundation for Science and Technology (FCT), within projects UIDB/04106/2020 and UIDP/04106/2020.


\begin{thebibliography}{99}

\bibitem{AP}
A. Akopyan and A. Plakhov. {\it Minimal resistance of curves under the single impact assumption.} SIAM J. Math. Anal. {\bf 47}, 2754-2769 (2015).

\bibitem{BrFK}
F. Brock, V. Ferone and B. Kawohl.\, \textit{A symmetry problem in the calculus of variations}.\, Calc. Var. {\bf 4}, 593-599 (1996).

\bibitem{BFK}
G. Buttazzo, V. Ferone, B. Kawohl.\,
\textit{Minimum problems over sets of concave functions and related questions}.\, Math. Nachr. {\bf 173}, 71--89 (1995).

\bibitem{BK}
G. Buttazzo, B. Kawohl.\, \textit{On Newton's problem of minimal resistance}.\, Math. Intell. {\bf 15}, 7--12 (1993).

\bibitem{BG97}
G. Buttazzo, P. Guasoni.\, {\it Shape optimization problems over classes of convex domains}.\, J. Convex Anal. {\bf 4}, No.2, 343-351 (1997).

\bibitem{CL1}
M. Comte, T. Lachand-Robert.\, \textit{Newton's problem of the body of minimal resistance under a single-impact assumption}.\,
Calc. Var. Partial Differ. Equ. {\bf 12}, 173-211 (2001).

\bibitem{CL2}
M. Comte, T. Lachand-Robert.\, \textit{Existence of minimizers for Newton's problem of the body of minimal resistance under a single-impact assumption}.\, J. Anal. Math. {\bf 83}, 313-335 (2001).

\bibitem{LO}
T. Lachand-Robert and E. Oudet.\, \textit{Minimizing within convex bodies using a convex hull method}.\, SIAM J. Optim. {\bf 16}, 368-379 (2006).

\bibitem{M}
P. Marcellini. {\it Nonconvex integrals of the Calculus of Variations}. Proceedings of “Methods of Nonconvex Analisys”, Lecture Notes in Math. {\bf1446}, 16–57 (1990).

\bibitem{N}
I. Newton. {\it Philosophiae naturalis principia mathematica}. (London: Streater) 1687.

\bibitem{ARMA}
A. Plakhov. {\it Billiards and two-dimensional problems of optimal resistance}. Arch. Ration. Mech. Anal. {\bf 194}, 349-382 (2009). 

\bibitem{SIREV}
A. Plakhov. {\it Problems of minimal resistance and the Kakeya problem.} SIAM Review {\bf 57}, 421-434 (2015).

\bibitem{bookP}
A. Plakhov. {\it Exterior billiards. Systems with impacts outside bounded domains}. Springer, New York, 2012. xiv+284 pp. ISBN: 978-1-4614-4480-0

\bibitem{boundary}
A. Plakhov. {\it A note on Newton's problem of minimal resistance for convex bodies}. Calc. Var. Partial Differ. Edu. {\bf 59} (2020) 167.

\bibitem{MMS}
A. Plakhov. {\it On generalized Newton's aerodynamic problem}. Trans. Moscow Math. Soc. {\bf 82}, 217-226 (2021).

\bibitem{nose stretching}
A. Plakhov. {\it Method of nose stretching in Newton's problem of minimal resistance}. Nonlinearity {\bf 34}, 4716-4743 (2021).

\bibitem{OMT}
A. Plakhov and T. Tchemisova. {\it Problems of optimal transportation on the circle and their mechanical applications}. J. Diff. Eqs. {\bf 262}, 2449-2492 (2017).

\bibitem{Pogorelov}
A.\,V. Pogorelov. {\it Extrinsic geometry of convex surfaces.} Providence, R.I.: American Mathematical Society (AMS). (1973).

\bibitem{W}
G. Wachsmuth.\, {\it The numerical solution of Newton’s problem of least resistance}.\, Math. Program. A {\bf 147}, 331-350 (2014).



\end{thebibliography}
\end{document}